\newtheorem{theorem}{Theorem}[section]
\newtheorem{thmy}{Theorem}
\newtheorem{lemma}[theorem]{Lemma}
\newtheorem{corollary}[theorem]{Corollary}
\def\barr{\begin{array}}
\def\earr{\end{array}}
\title{Finite groups with many normalizers}
\author{Marius T\u arn\u auceanu}
\date{August 4, 2024}
\begin{document}

\maketitle

\begin{abstract}
A group $G$ is said to have dense normalizers if each non-empty open interval in its subgroup lattice $L(G)$ contains the normalizer of a certain subgroup of $G$. In this note, we find all finite groups satisfying this property. We also classify the finite groups in which $k$ subgroups are not normalizers, for $k=1,2,3,4$.
\end{abstract}

{\small
\noindent
{\bf MSC2000\,:} Primary 20D60; Secondary 20D30, 20E99.

\noindent
{\bf Key words\,:} normalizers, density, finite groups.}

\section{Introduction}

Let $\mathcal{X}$ be a property pertaining to subgroups of a group. We say that a group $G$ has \textit{dense $\mathcal{X}$-subgroups} if for each pair $(H,K)$ of subgroups of $G$ such that $H<K$ and $H$ is not maximal in $K$, there exists a $\mathcal{X}$-subgroup $X$ of $G$ such that $H<X<K$. Groups with dense $\mathcal{X}$-subgroups have been studied for many properties $\mathcal{X}$: being a normal subgroup \cite{9}, a pronormal subgroup \cite{13}, a normal-by-finite subgroup \cite{4}, a nearly normal subgroup \cite{5} or a subnormal subgroup \cite{6}.

Our first result deals with the property of being the normalizer of a certain subgroup.

\begin{theorem}
A finite group $G$ has dense normalizers if and only if $G$ is either cyclic of prime order or non-abelian of order $pq$ for some primes $p$ and $q$. 
\end{theorem}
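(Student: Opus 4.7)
My plan is to prove the two implications separately; the hard work is in the ``only if'' direction, where I would pin down the structure of $G$ in three stages: the element orders, the Sylow orders, and finally the number of prime divisors of $|G|$.

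The ``if'' direction is a quick lattice check. If $|G|$ is prime then $L(G)=\{1,G\}$ has no nonempty open interval, so the condition holds vacuously. If $G$ is non-abelian of order $pq$ with $p<q$, then $L(G)=\{1,P_1,\dots,P_q,Q,G\}$, where the $P_i$'s are the $q$ Sylow $p$-subgroups and $Q$ is the unique normal Sylow $q$-subgroup; the only nonempty open interval is $(1,G)$, and each $P_i$ is self-normalizing (since $|G:N_G(P_i)|=q$), so $P_i=N_G(P_i)\in(1,G)$.

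For the converse, assume $G$ has dense normalizers. I first show that \emph{every element of $G$ has prime order}. If some $x\in G$ had order divisible by $p^2$ or by two distinct primes $p,q$, then $\langle x\rangle$ would contain a cyclic subgroup $C\cong Z_{p^2}$ or $Z_{pq}$, and the nonempty open interval $(1,C)$ would consist of proper nontrivial subgroups of $C$. A subgroup $K\in(1,C)$ could equal $N_G(Y)$ only for $Y\le K$; but $Y=1$ gives $N_G(Y)=G$ and $Y=K$ gives $N_G(K)\supseteq C>K$ (as $C$ is abelian), a contradiction. Next I show \emph{every Sylow subgroup of $G$ has prime order}: by the previous step each Sylow $p$-subgroup $P$ has exponent $p$, so if $|P|\ge p^2$ then $P$ contains a subgroup $A\cong Z_p\times Z_p$ (generated by a central element $z\in P$ of order $p$ and any $x\in P\setminus\langle z\rangle$), and applying the same computation to the nonempty interval $(1,A)$ gives a contradiction, since any order-$p$ subgroup $K<A$ satisfies $N_G(K)\supseteq A>K$. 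Therefore $|G|$ is squarefree.

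The third stage I expect to be the main obstacle: ruling out three or more prime divisors of $|G|$. Since $|G|$ is squarefree, $G$ is solvable, hence admits a minimal normal subgroup $N$, necessarily elementary abelian; by squarefreeness $|N|=p$ for some prime $p$, so $N$ is the normal Sylow $p$-subgroup. Setting $C=C_G(N)$, the quotient $G/C$ embeds in $\mathrm{Aut}(N)\cong Z_{p-1}$; if $C$ contained an element $y$ of prime order $q\ne p$, then $y$ would commute with a generator $n\in N$ and $yn$ would have composite order $pq$, contradicting the first step. Hence $C=N$, so $G/N$ is cyclic of order $|G|/p$, and Schur--Zassenhaus realizes it as a cyclic complement $H\le G$. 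If $|G|$ had a third prime divisor, $|H|$ would be divisible by at least two distinct primes, so $H$ would contain an element of composite order---again impossible. Combining everything yields $|G|\in\{p,pq\}$, and the case $|G|=pq$ abelian is excluded since $Z_{pq}$ has an element of order $pq$; the first two stages are short interval-by-interval checks, whereas this third one genuinely requires solvable-group machinery to convert the weak lattice hypothesis into a strong numerical restriction on $|G|$.
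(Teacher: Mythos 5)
Your proof is correct, and the two directions are handled soundly. The ``if'' direction (which the paper dismisses as obvious) is checked properly, and your core computation is the same engine the paper runs: a subgroup $K$ with $1<K<A$ for $A$ abelian cannot be a normalizer, since any $Y$ with $N_G(Y)=K$ must lie in $K$, while $N_G(1)=G$ and $N_G(K)\supseteq A\supsetneq K$. The difference is in the structural reduction. The paper first rules out non-squarefree orders using an arbitrary subgroup of order $p^2$ (not necessarily cyclic), and then, for three or more prime divisors, invokes the classical fact that a group of squarefree order is a metacyclic semidirect product $\mathbb{Z}_{p_1\cdots p_r}\rtimes\mathbb{Z}_{p_{r+1}\cdots p_k}$, which hands it a cyclic subgroup of order $p_ip_j$ and hence a bad interval. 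You instead split the squarefree step into ``all elements have prime order'' (via $\mathbb{Z}_{p^2}$ and $\mathbb{Z}_{pq}$ subgroups) plus ``Sylow subgroups have prime order'' (manufacturing $\mathbb{Z}_p\times\mathbb{Z}_p$ from a central element in an exponent-$p$ group), and then eliminate a third prime by a minimal-normal-subgroup/centralizer argument: $C_G(N)=N$ forces $G/N$ cyclic, and Schur--Zassenhaus lifts it to a cyclic complement that would contain an element of composite order. Both routes rest on the solvability of squarefree-order groups; yours is a bit longer but needs only solvability rather than the full H\"older classification, so it is arguably more self-contained. One shared blind spot, not a flaw of yours specifically: neither argument addresses the trivial group, which satisfies the density condition vacuously but is excluded by the statement, so $G$ must tacitly be assumed nontrivial.
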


Next we study finite groups with many normalizers in another sense. Let $G$ be a finite group, $L(G)$ be the subgroup lattice of $G$ and $N_G=\{N_G(H)\mid H\in L(G)\}$. Obviously, we have $|N_G|=|L(G)|$ if and only if $G$ is trivial.\newpage \noindent Our second result describes finite groups $G$ satisfying $|N_G|=|L(G)|-k$, where $k=1,2,3$. Finite groups $G$ such that $|N_G|=|L(G)|-4$ are also determined modulo ZM-groups.

\begin{theorem}
For a finite group $G$, we have:
\begin{itemize}
\item[{\rm a)}] $|N_G|=|L(G)|-1$ if and only if $G\cong\mathbb{Z}_p$ with $p$ prime.
\item[{\rm b)}] $|N_G|=|L(G)|-2$ if and only if $G\cong\mathbb{Z}_{p^2}$ with $p$ prime or $G\cong\mathbb{Z}_p\rtimes\mathbb{Z}_q$ with $p,q$ primes satisfying $q\mid p-1$.
\item[{\rm c)}] $|N_G|=|L(G)|-3$ if and only if $G\cong\mathbb{Z}_{p^3}$ with $p$ prime or $G\cong\mathbb{Z}_{pq}$ with $p,q$ distinct primes or $G\cong\mathbb{Z}_{p^2}\rtimes\mathbb{Z}_q$ with $p,q$ primes satisfying $q\mid p-1$.
\item[{\rm d)}] If $G$ is not a ZM-group, then $|N_G|=|L(G)|-4$ if and only if $G\cong\mathbb{Z}_{p^4}$ with $p$ prime or $G\cong\mathbb{Z}_2\times\mathbb{Z}_2$ or $G\cong A_4$.
\end{itemize} 
\end{theorem}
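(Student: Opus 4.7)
Write $f(G) := |L(G)| - |N_G|$ for the number of subgroups of $G$ that do not arise as $N_G(K)$ for any $K \in L(G)$. Two elementary observations organize the argument. First, the trivial subgroup is never a normalizer when $G \neq 1$: the relation $H \subseteq N_G(H)$ forces the only candidate with $N_G(H) = \{1\}$ to be $H = \{1\}$, but $N_G(\{1\}) = G$; hence $f(G) \geq 1$ always. Second, when $G$ is abelian every subgroup is normal and $N_G(H) = G$, so $N_G = \{G\}$ and $f(G) = |L(G)| - 1$.

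The second observation reduces the abelian contribution of each of a)--d) to listing abelian groups with $|L(G)| = k+1$, yielding $\mathbb{Z}_p$; $\mathbb{Z}_{p^2}$; $\{\mathbb{Z}_{p^3},\mathbb{Z}_{pq}\}$; and $\{\mathbb{Z}_{p^4},\mathbb{Z}_2\times\mathbb{Z}_2\}$ for $k = 1,2,3,4$ respectively (in the last case, $\mathbb{Z}_p\times\mathbb{Z}_p$ has $p+3$ subgroups, forcing $p=2$, and no other non-cyclic abelian group reaches exactly five subgroups). For part~a) I also need $f(G) \geq 2$ for non-abelian $G$: if $f(G) = 1$, every subgroup $M$ of prime order arises as $N_G(K)$ with $K \leq M$, and either $K = \{1\}$ (forcing $M = G$ of prime order, abelian, a contradiction) or $K = M$, so $M = N_G(M)$. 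By the normalizer condition in $p$-groups this forces every Sylow subgroup of $G$ to have prime order, hence $|G|$ is square-free; but then the Sylow attached to the largest prime divisor of $|G|$ is normal in $G$, contradicting its self-normalization.

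For the non-abelian parts of b) and c), the candidates are $\mathbb{Z}_p\rtimes\mathbb{Z}_q$ and $\mathbb{Z}_{p^2}\rtimes\mathbb{Z}_q$ (with $q \mid p-1$), handled by a direct count on the subgroup lattice. In $\mathbb{Z}_p\rtimes\mathbb{Z}_q$, $|L| = p+3$ and the $p$ Sylow $q$-subgroups are self-normalizing, so $|N_G| = p+1$ and $f = 2$. In $\mathbb{Z}_{p^2}\rtimes\mathbb{Z}_q$, $|L| = p^2+p+4$, with $p^2$ self-normalizing Sylow $q$-subgroups and $p$ self-normalizing subgroups of order $pq$, yielding $|N_G| = p^2+p+1$ and $f = 3$. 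For the forward direction one additionally rules out all other non-abelian groups with $f(G) \in \{2,3\}$ by similar Sylow-theoretic arguments.

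Part~d) for non-abelian non-ZM $G$ is the main obstacle. Such $G$ has a non-cyclic Sylow $p$-subgroup $P$, contributing at least $p+3$ subgroups to $L(G)$; combined with a bound on $|N_G|$ from the conjugacy classes of non-normal subgroups, this narrows the possibilities for $|G|$ to a short list. Verification at $A_4$ yields $|L(A_4)| = 10$ and $N_G = \{A_4, V_4, P_1, P_2, P_3, P_4\}$ (the four Sylow $3$-subgroups are self-normalizing, and the Klein group $V_4$ is the common normalizer of the three order-$2$ subgroups), so $f(A_4) = 4$. The delicate remaining step is to exclude the other small non-abelian non-ZM candidates (notably $D_8$, $Q_8$, $\mathbb{Z}_2 \times S_3$, and the handful of remaining groups of order at most $16$ with a non-cyclic Sylow) by direct computation of $f$; ensuring the completeness of this enumeration is the principal difficulty.
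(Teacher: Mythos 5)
Your easy directions are sound: the observation that $N_G=\{G\}$ for abelian $G$, the enumeration of abelian groups with $|L(G)|=k+1$, the self-normalization argument giving $f(G)\geq 2$ for non-abelian $G$, and the explicit counts for $\mathbb{Z}_p\rtimes\mathbb{Z}_q$, $\mathbb{Z}_{p^2}\rtimes\mathbb{Z}_q$ and $A_4$ are all correct. But the two completeness claims --- the heart of the theorem --- are not actually proved, and in part d) the proposed strategy would not work.

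For b) and c), "one additionally rules out all other non-abelian groups with $f(G)\in\{2,3\}$ by similar Sylow-theoretic arguments" hides the key structural lemma. The paper's route (Lemma 2.1) is: if $G$ contains $H\cong\mathbb{Z}_p\times\mathbb{Z}_p$, then \emph{every} proper subgroup $L<H$ fails to be a normalizer (if $L=N_G(M)$ then $M\leq L<H$ and $H$, being abelian, normalizes $M$, so $H\leq L$, absurd), giving $f(G)\geq p+2\geq 4$; and a generalized quaternion Sylow $2$-subgroup contains a copy of $Q_8$, whose five proper subgroups fail to be normalizers for the same reason. Hence $f(G)\leq 3$ forces all Sylow subgroups cyclic, i.e.\ $G$ cyclic or a ZM-group ${\rm ZM}(m,n,r)$ with $\tau(m)+\tau(n)\leq 5$, after which the case analysis is finite. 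Your sketch never establishes that a non-abelian group with $f\leq 3$ must be metacyclic, so the "forward direction" of b) and c) is open in your write-up.

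Part d) is the genuine failure. You assert that a bound on $|N_G|$ "narrows the possibilities for $|G|$ to a short list" and then propose checking non-ZM groups of order at most $16$; no such bound is derived, and none of this form can exist, since a priori a group of order $4m$ with $m$ odd and arbitrarily large could satisfy $f(G)=4$. The paper's argument is structural, not an enumeration: the Klein subgroup $H$ already accounts for all four non-normalizers, which forces $H$ to be normal and (via the observation that a group of order $8$ containing $H$ would be $\mathbb{Z}_2\times\mathbb{Z}_4$ and would produce a fifth non-normalizer) forces $H$ to be the full Sylow $2$-subgroup; then the odd part of $|G|$ is square-free and every odd-order Sylow subgroup is self-normalizing, hence contained in no larger abelian subgroup, so every non-trivial element of $G$ has prime order; the classification of such groups (Theorem B of the paper, due to Deaconescu and Cheng--Deaconescu--Lang--Shi) then leaves only a Frobenius group with kernel $\mathbb{Z}_2\times\mathbb{Z}_2$ and complement of order $3$, i.e.\ $A_4$. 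Without some substitute for this chain of reductions --- in particular for the step eliminating composite element orders and invoking the prime-order classification --- your plan for d) cannot be completed.
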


We note that finite groups $G$ with few normalizers, i.e. with small number $|N_G|$, have been studied in \cite{8,10,14,15}. Also, we remark that item d) in Theorem 1.2 leads to the following characterization of $A_4$.

\begin{corollary}
$A_4$ is the unique finite non-abelian group $G$ satisfying $|N_G|=|L(G)|-4$ and having at least one non-cyclic Sylow subgroup.
\end{corollary}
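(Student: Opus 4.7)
The proof plan is to derive the statement as an immediate consequence of Theorem~1.2(d). First, I would observe that by definition every ZM-group has all its Sylow subgroups cyclic. Consequently, under the standing assumption that $G$ possesses at least one non-cyclic Sylow subgroup, $G$ is automatically \emph{not} a ZM-group, so Theorem~1.2(d) applies and yields that
\[
G\cong \mathbb{Z}_{p^4}\ (\text{for some prime }p),\qquad G\cong \mathbb{Z}_2\times\mathbb{Z}_2,\qquad\text{or}\qquad G\cong A_4.
\]

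Next I would invoke the non-abelian hypothesis to discard the first two candidates: $\mathbb{Z}_{p^4}$ is cyclic and $\mathbb{Z}_2\times\mathbb{Z}_2$ is abelian, so neither of them can realize a non-abelian $G$. This leaves only $G\cong A_4$.

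For uniqueness in the opposite direction, I would verify that $A_4$ itself genuinely meets all three conditions appearing in the statement. It is manifestly non-abelian, its Sylow $2$-subgroup is the Klein four-group $\mathbb{Z}_2\times\mathbb{Z}_2$ and is therefore non-cyclic, and the equality $|N_{A_4}|=|L(A_4)|-4$ is precisely what Theorem~1.2(d) asserts about $A_4$ (which is listed there among the extremal examples).

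There is essentially no obstacle in this argument: once Theorem~1.2(d) is available, the corollary reduces to the single observation that the presence of a non-cyclic Sylow subgroup excludes the ZM-group case, together with the trivial elimination of the two abelian possibilities from the resulting short list.
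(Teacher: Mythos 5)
Your proposal is correct and follows exactly the route the paper intends: the paper offers no separate proof, merely noting that the corollary "leads from" Theorem 1.2(d), and your argument (non-cyclic Sylow subgroup $\Rightarrow$ not a ZM-group, then eliminate the two abelian candidates, then check $A_4$ conversely) is precisely that deduction, carried out in full.
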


We recall that a \textit{ZM-group} is a finite non-abelian group with all Sylow subgroups cyclic. By \cite{7}, such a
group is of type
\begin{equation}
{\rm ZM}(m,n,r)=\langle a, b \mid a^m = b^n = 1,\, b^{-1} a b = a^r\rangle,\nonumber
\end{equation}where the triple $(m,n,r)$ satisfies the conditions
\begin{equation}
{\rm gcd}(m,n) = {\rm gcd}(m, r-1) = 1 \mbox{ and } r^n \equiv 1 \hspace{1mm}({\rm mod}\hspace{1mm}m).\nonumber
\end{equation}It is clear that $|{\rm ZM}(m,n,r)|=mn$ and $Z({\rm ZM}(m,n,r))=\langle b^d\rangle$, where $d$ is the multiplicative order of
$r$ modulo $m$, i.e.
\begin{equation}
d=o_m(r)={\rm min}\{k\in\mathbb{N}^* \mid r^k\equiv 1 \hspace{1mm}({\rm mod} \hspace{1mm}m)\}.\nonumber
\end{equation}The subgroups of ${\rm ZM}(m,n,r)$ have been completely described
in \cite{1}. Set
\begin{equation}
L=\left\{(m_1,n_1,s)\in\mathbb{N}^3 \,:\, m_1|m, n_1|n,s<m_1, m_1|s\frac{r^n-1}{r^{n_1}-1}\right\}.\nonumber
\end{equation}Then there is a bijection between $L$ and the subgroup lattice
$L({\rm ZM}(m,n,r))$ of ${\rm ZM}(m,n,r)$, namely the function
that maps a triple $(m_1,n_1,s)\in L$ into the subgroup $H_{(m_1,n_1,s)}$ defined by\newpage
\begin{equation}
H_{(m_1,n_1,s)}=\bigcup_{k=1}^{\frac{n}{n_1}}\alpha(n_1,s)^k\langle a^{m_1}\rangle=\langle a^{m_1},\alpha(n_1, s)\rangle,\nonumber
\end{equation}where $\alpha(x, y)=b^xa^y$, for all $0\leq x<n$ and $0\leq y<m$. We remark that $|H_{(m_1,n_1,s)}|=\frac{mn}{m_1n_1}$\,, for any $s$ satisfying $(m_1,n_1,s)\in L$.
\bigskip

For the proof of Theorem 1.2, we need the following well-known result on $p$-groups (see e.g. (4.4) of \cite{12}, II).

\begin{thmy}
The following conditions on a finite $p$-group $G$ are equivalent:
\begin{itemize} 
\item[{\rm a)}] Every abelian subgroup of $G$ is cyclic.
\item[{\rm b)}] $G$ has a unique subgroup of order $p$.
\item[{\rm c)}] $G$ is either cyclic or a generalized quaternion $2$-group.
\end{itemize}
\end{thmy}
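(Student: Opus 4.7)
My plan is to close the chain of implications $(c) \Rightarrow (b) \Rightarrow (a) \Rightarrow (b) \Rightarrow (c)$, with the first three directions being short verifications and the real content sitting in $(b) \Rightarrow (c)$.

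For the easy directions: $(c) \Rightarrow (b)$ is a direct inspection, since a cyclic $p$-group has a unique subgroup of each order dividing $|G|$, and from the standard presentation of $Q_{2^n}$ one reads off $a^{2^{n-2}}$ as its unique involution. For $(b) \Rightarrow (a)$, any abelian subgroup $A$ inherits property (b); writing $A \cong \mathbb{Z}_{p^{a_1}} \times \cdots \times \mathbb{Z}_{p^{a_k}}$ via the structure theorem and counting its order-$p$ subgroups yields $(p^k - 1)/(p - 1)$, which equals $1$ only for $k = 1$. For $(a) \Rightarrow (b)$, since $G$ is a $p$-group $Z(G)$ is nontrivial and contains some subgroup $Z_0$ of order $p$; if a further subgroup $H$ of order $p$ distinct from $Z_0$ existed, then $\langle H, Z_0 \rangle$ would be abelian (as $Z_0$ is central) yet non-cyclic, violating (a).

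The heart of the argument is $(b) \Rightarrow (c)$, which I would prove by strong induction on $|G|$. If $G$ is abelian, the structure theorem and the count above already force $G$ cyclic. If $G$ is non-abelian, property (b) is inherited by every proper subgroup, so by induction each such subgroup is cyclic or generalized quaternion. The pivotal step is producing a cyclic maximal subgroup of $G$; once it is in hand, the classical classification of non-abelian $p$-groups containing a cyclic subgroup of index $p$ (the modular group $M_{p^n}$ for all primes $p$, and additionally $D_{2^n}$, $SD_{2^n}$, $Q_{2^n}$ when $p = 2$) reduces the problem to inspecting the order-$p$ subgroups of each candidate, and only $Q_{2^n}$ survives.

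The main obstacle is producing such a cyclic maximal subgroup in the non-abelian case. My approach is to argue by contradiction: if no maximal subgroup of $G$ is cyclic, then all maximal subgroups are generalized quaternion, forcing $p = 2$ and $|G| \geq 2^4$. I would then pick two distinct maximal subgroups $M_1, M_2$ and study their intersection $M_1 \cap M_2$, which has index $4$ in $G$ and sits as a proper subgroup in each $Q_{2^{n-1}} \cong M_i$; using that every involution of $G$ coincides with the unique central involution of each $M_i$, together with $G = \langle M_1, M_2 \rangle$, I expect to construct either an abelian subgroup of rank $2$ or a second involution in $G$—both ruled out by (b). The delicate bookkeeping of intersections and centralizers in this final step is where I anticipate most of the technical work.
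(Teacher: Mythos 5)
The paper offers no proof of Theorem A at all --- it is quoted as a known result ((4.4) of Suzuki, II) --- so your attempt has to stand on its own. Your three easy implications $(c)\Rightarrow(b)$, $(b)\Rightarrow(a)$ and $(a)\Rightarrow(b)$ are correct as written, and your reduction of $(b)\Rightarrow(c)$ for odd $p$ does close up: by induction all maximal subgroups of a non-abelian $G$ would be cyclic (generalized quaternion groups being $2$-groups), and the classification of $p$-groups with a cyclic subgroup of index $p$ leaves only $M_{p^n}$, which has $p+1$ subgroups of order $p$. Note, though, that this classification is itself a theorem of essentially the same depth as the one you are proving, so you are not so much proving Theorem A as trading it for another citation.

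The genuine gap is the case $p=2$ with no cyclic maximal subgroup. Ruling out the configuration ``every maximal subgroup of $G$ is generalized quaternion'' is exactly where the content of the theorem sits, and your proposal stops at ``I expect to construct either an abelian subgroup of rank $2$ or a second involution.'' That is a hope, not an argument: carrying it out requires, at minimum, splitting on whether $D=M_1\cap M_2$ is cyclic or quaternion, showing $D$ (or the characteristic cyclic maximal subgroup of some $M_i$) is normal and self-centralizing, embedding $G/D$ into $\mathrm{Aut}(\mathbb{Z}_{2^m})\cong\mathbb{Z}_2\times\mathbb{Z}_{2^{m-2}}$, and checking that the automorphisms $x\mapsto(2^{m-1}\pm 1)x$ are incompatible with the third maximal subgroup being quaternion --- with separate treatment of $|G|=16$ and $32$. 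A cleaner standard route avoids this entirely: take a maximal normal abelian subgroup $A$; by your own $(b)\Rightarrow(a)$ it is cyclic, and maximality gives $C_G(A)=A$, so $G/A$ embeds in $\mathrm{Aut}(A)$; a short analysis of the three involutory automorphisms of $\mathbb{Z}_{2^m}$ (only inversion fixes no element of order $4$ and creates no second involution or rank-$2$ abelian subgroup) forces $[G:A]=2$ with the outer generator inverting $A$, i.e. $G\cong Q_{2^n}$. As it stands, your write-up identifies the right obstacle but does not overcome it.
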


By a \textit{generalized quaternion $2$-group} we will understand a group of order $2^n$ for some positive integer $n\geq 3$, defined by
\begin{equation}
Q_{2^n}=\langle a,b \mid a^{2^{n-2}}= b^2, a^{2^{n-1}}=1, b^{-1}ab=a^{-1}\rangle.\nonumber
\end{equation}

We also need the classification of finite groups with all non-trivial elements of prime order (see e.g. \cite{2,3}). 

\begin{thmy}
Let $G$ be a finite group having all non-trivial elements of prime order. Then:
\begin{itemize} 
\item[{\rm a)}] $G$ is nilpotent if and only if $G$ is a $p$-group of exponent $p$.
\item[{\rm b)}] $G$ is solvable and non-nilpotent if and only if $G$ is a Frobenius group with kernel $P\in{\rm Syl}_p(G)$ with $P$ a $p$-group of exponent $p$ and complement $Q\in{\rm Syl}_q(G)$ with $|Q|=q$. Moreover, if $|G|=p^nq$ then $G$ has a chief series $G=G_0>P=G_1>G_2>...>G_k>G_{k+1}=1$ such that for every $1\leq i\leq k$ one has $G_i/G_{i+1}\leq Z(P/G_{i+1})$, $Q$ acts irreducibly on $G_i/G_{i+1}$ and $|G_i/G_{i+1}|=p^b$, where $b$ is the exponent of $p\,\, ({\rm mod}\,\, q)$.
\item[{\rm c)}] $G$ is non-solvable if and only if $G\cong A_5$.
\end{itemize}
\end{thmy}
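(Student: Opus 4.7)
The plan is to treat the three cases separately, with (a) being nearly immediate, (b) requiring a Frobenius structure plus a chief-series analysis, and (c) being the genuine obstacle. For (a), if $G$ is nilpotent then $G=\prod_p P_p$ is the internal direct product of its Sylow subgroups; if two distinct primes $p,q$ divided $|G|$, choosing non-trivial $x\in P_p$ and $y\in P_q$ would give $xy$ of order $pq$, contradicting the prime-order hypothesis. Hence $G$ is a $p$-group, and since every non-identity element has order $p$, its exponent is $p$; the converse is clear.

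For (b), assume $G$ is solvable and non-nilpotent. The key observation used throughout is that two elements of coprime prime orders cannot commute, since their product would have composite order. A minimal normal subgroup $M$ of the solvable group $G$ is elementary abelian, so $M$ is a $p$-group of exponent $p$; I would then show that $P:=O_p(G)$ is a full Sylow $p$-subgroup and that for any $q$-element $y\in G\setminus P$ one has $C_P(y)=1$. This fixed-point-free action forces $G$ to be a Frobenius group with kernel $P$ and complement $Q$; since $Q$ has exponent $q$ and Frobenius complements have all Sylow subgroups cyclic or generalized quaternion (Thompson), $Q$ must be cyclic of order $q$, which also gives $|\pi(G)|=2$. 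For the chief series, refine $1\triangleleft P\triangleleft G$ to a $G$-chief series. Each factor $G_i/G_{i+1}$ with $i\ge 1$ is a minimal normal subgroup of the $p$-group $P/G_{i+1}$, hence lies in $Z(P/G_{i+1})$, and is $Q$-irreducible as an $\mathbb{F}_p Q$-module on which $Q$ acts without non-trivial fixed points; the minimal polynomial of a generator of $Q$ divides $(X^q-1)/(X-1)$ over $\mathbb{F}_p$, so the factor has order $p^b$ where $b$ is the order of $p$ modulo $q$.

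For (c), assume $G$ is non-solvable. The prime-order hypothesis is inherited by all subgroups and quotients, so every non-abelian composition factor $S$ of $G$ is a finite simple group in which every element has prime order. I would invoke the classification of such simple groups (Brandl, Deaconescu; approachable via Suzuki's theory of $CIT$-groups), yielding $S\cong A_5$. To bootstrap from this to $G\cong A_5$, let $N$ be a minimal normal subgroup; since $N$ is a direct product of isomorphic non-abelian simple groups and products of elements drawn from distinct factors would have composite order, $N$ itself must be isomorphic to $A_5$. Then $C_G(N)=1$: a non-trivial prime-order $g$ centralizing $N$ would commute with elements of $N$ of orders $2,3,5$, forcing its order to lie simultaneously in $\{2,3,5\}$, which is impossible. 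Hence $G\hookrightarrow\mathrm{Aut}(A_5)\cong S_5$; as $S_5$ contains elements of order $6$, we must have $G=N\cong A_5$. The main obstacle throughout is the reduction of simple CP-groups to $A_5$, which relies on non-trivial classification-type input and is where I would cite references [2,3] directly rather than attempt a self-contained argument.
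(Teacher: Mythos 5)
This statement is Theorem B of the paper, which is quoted from the literature with the citation ``see e.g.\ \cite{2,3}''; the paper contains no proof of it, so there is no in-paper argument to compare routes with. Judged on its own terms, your sketch is sound for part a), and the chief-series analysis at the end of b) (centrality of chief factors below $P$, fixed-point-free irreducible $\mathbb{F}_pQ$-modules of dimension $b=o_q(p)$) is correct. But in b) the central structural claim --- that $P:=O_p(G)$ is a full Sylow $p$-subgroup, equivalently that a solvable non-nilpotent group with all elements of prime order has a normal Sylow subgroup --- is merely announced (``I would then show''), and it is not a routine verification; it is most of the content of that part. (Minor point: the cyclic-or-generalized-quaternion structure of Sylow subgroups of Frobenius complements is due to Burnside/Zassenhaus; Thompson's contribution is the nilpotence of the kernel.)

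The genuine gap is in c). You take a minimal normal subgroup $N$ and assert it is a direct product of isomorphic non-abelian simple groups, but a non-solvable group can perfectly well have an abelian minimal normal subgroup, so you must first show that the solvable radical of $G$ is trivial. This is not cosmetic: it is exactly where one must exclude candidates such as $\mathbb{F}_2^4\rtimes A_5$, and the exclusion needs an actual argument. For instance, if $N$ is a minimal normal elementary abelian $2$-subgroup, any involution $t\notin C_G(N)$ and any $v\in N$ with $v^t\neq v$ give $(tv)^2=v^tv\neq 1$, so $tv$ has order $4$; hence $G/C_G(N)$ has odd order and, by Feit--Thompson, $G$ is solvable. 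If instead $N$ is an elementary abelian $p$-group with $p$ odd, every $p'$-subgroup of $G$ acts fixed-point-freely on $N$, forcing the Sylow $2$-subgroup to be cyclic of order $2$, whence $G$ has a normal $2$-complement and is again solvable by Feit--Thompson. Only after this reduction does your (correct) argument --- $N\cong A_5$ by the classification of simple groups with all elements of prime order, $C_G(N)=1$, $G\hookrightarrow S_5$, and $S_5$ has elements of order $6$ --- go through. Since you are already citing \cite{2,3} for the simple-group classification, the honest statement is that your c) is a reduction modulo both that classification and the missing triviality of the solvable radical.
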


Most of our notation is standard and will not be repeated here. Basic definitions and results on groups can be found in \cite{7,12}. For subgroup lattice concepts we refer the reader to \cite{11}.

\section{Proofs of the main results}

\noindent{\bf Proof of Theorem 1.1.} Assume that $G$ has dense normalizers. 

First of all, we will prove that $G$ is of square-free order. Let $|G|=p_1^{n_1}\cdots p_k^{n_k}$ and assume that $n_1\geq 2$. Then $G$ contains a subgroup $P_1$ of order $p_1^2$. By hypothesis, there is $H\leq G$ such that $1<N_G(H)<P_1$. It follows that $|N_G(H)|=p_1$. Since $H\subseteq N_G(H)$, we have either $H=1$ implying that $G=N_G(H)$ is of order $p_1$ - a contradiction, or $H=N_G(H)$ implying that $P_1\subseteq H$ - a contradiction. Thus $n_1=...=n_k=1$.

Assume next that $k\geq 3$. Clearly, $G$ cannot be cyclic. Then $G$ is a non-trivial semidirect product of type $\mathbb{Z}_{p_1\cdots p_r}\rtimes\mathbb{Z}_{p_{r+1}\cdots p_k}$. Since $r\geq 2$ or $k-r\geq 2$, $G$ possesses a cyclic subgroup $K$ of order $p_ip_j$ for some $i,j\in\{1,...,k\}$. Similarly, we get that the interval $(1,K)$ of $L(G)$ contains no normalizer, a contradiction. Thus $k\leq 2$ and $G$ is either cyclic of prime order or non-abelian of order a product of two primes.

The converse is obvious, completing the proof.\hspace{46mm}$\square$
\smallskip

The following lemma will be useful for the proof of Theorem 1.2.

\begin{lemma}
Let $G$ be a finite group such that $|L(G)\setminus N_G|\leq 3$. Then $G$ is of one of the following types:
\begin{itemize} 
\item[{\rm a)}] A cyclic group $\mathbb{Z}_{p^a}$ with $p$ prime and $a\leq 3$, or $\mathbb{Z}_{pq}$ with $p,q$ distinct primes.
\item[{\rm b)}] A ZM-group ${\rm ZM}(m,n,r)$ with $\tau(m)+\tau(n)\leq 5$.
\end{itemize}
\end{lemma}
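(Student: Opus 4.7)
The plan is to show that $|L(G)\setminus N_G|\leq 3$ forces every Sylow subgroup of $G$ to be cyclic, after which the cyclic and ZM cases can be treated separately. Throughout I will use two elementary observations: since $K\leq N_G(K)$ for every $K\leq G$, we have $H\in N_G$ iff some $K\leq H$ satisfies $N_G(K)=H$; and the trivial subgroup is never a normalizer in a non-trivial group, so $\{1\}\in L(G)\setminus N_G$.

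First I rule out any subgroup $P\cong\mathbb{Z}_p\times\mathbb{Z}_p$. Its $p+1$ minimal subgroups $L_1,\ldots,L_{p+1}$ all satisfy $P\leq N_G(L_i)$, so none is self-normalizing; moreover any $K\leq L_i$ is $\{1\}$ (with $N_G(K)=G$) or $L_i$ itself (with $N_G(K)\supseteq P\neq L_i$), so none of the $L_i$ is a normalizer. Together with $\{1\}$ this produces at least $p+2\geq 4$ elements of $L(G)\setminus N_G$, a contradiction. By Theorem A the only non-cyclic possibility remaining for a Sylow subgroup is $Q_{2^n}$ with $n\geq 3$; in that case $G$ contains a copy of $Q_8$, whose four proper non-trivial subgroups (the center and three cyclic subgroups of order $4$) are all normal in $Q_8$ and hence strictly contained in their $G$-normalizers. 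The same argument shows none of them is a normalizer of any subgroup below it, producing together with $\{1\}$ at least five elements of $L(G)\setminus N_G$, again contradicting the hypothesis. Hence every Sylow subgroup of $G$ is cyclic, and $G$ is cyclic when nilpotent and a ZM-group otherwise.

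For cyclic $G\cong\mathbb{Z}_N$ every subgroup is normal, $N_G=\{G\}$, and $\tau(N)-1\leq 3$ forces $N\in\{1,p,p^2,p^3,pq\}$, matching case~a). For $G={\rm ZM}(m,n,r)$ I will exhibit $\tau(m)+\tau(n)-2$ distinct members of $L(G)\setminus N_G$. Since $\langle a\rangle$ is normal in $G$ and every subgroup $\langle a^{m_1}\rangle$ is characteristic in $\langle a\rangle$, each $\langle a^{m_1}\rangle$ is normal in $G$; so if $N_G(K)=\langle a^{m_1}\rangle$ for some $K$, then $K\leq\langle a\rangle$ forces $N_G(K)=G\neq\langle a^{m_1}\rangle$, and all $\tau(m)$ subgroups of $\langle a\rangle$ lie outside $N_G$. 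A direct computation using $b^{-1}ab=a^r$ and $\gcd(m,r-1)=1$ gives $N_G(\langle b^{n_2}\rangle)=\langle a^{m/d},b\rangle$ with $d=\gcd(m,r^{n_2}-1)$, an overgroup of order $dn\geq n$. Consequently, whenever $1<n_1<n$, no $K\leq\langle b^{n_1}\rangle$, necessarily of the form $\langle b^{n_2}\rangle$, can satisfy $N_G(K)=\langle b^{n_1}\rangle$, because $|N_G(K)|\geq n>n/n_1=|\langle b^{n_1}\rangle|$. This gives $\tau(n)-2$ further subgroups outside $N_G$, disjoint from the subgroups of $\langle a\rangle$ since $\gcd(m,n)=1$. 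Summing, $\tau(m)+\tau(n)-2\leq|L(G)\setminus N_G|\leq 3$, so $\tau(m)+\tau(n)\leq 5$.

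The main obstacle is the ZM step: the normalizer formula is short but I must verify that the identified subgroups cannot arise as $N_G(K)$ for \emph{any} $K\leq G$, not just for those contained in $\langle a\rangle$ or $\langle b\rangle$; fortunately $K\leq N_G(K)$ restricts $K$ sharply enough to make the check routine. The reductions to cyclic Sylow subgroups are uniform applications of the same counting template combined with Theorem A.
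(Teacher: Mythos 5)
Your proof is correct and follows essentially the same route as the paper: exclude $\mathbb{Z}_p\times\mathbb{Z}_p$ and $Q_8$ by counting non-normalizers among their proper subgroups, conclude via Theorem A that all Sylow subgroups are cyclic, and then treat the cyclic and ZM cases by counting non-normalizers inside $\langle a\rangle$ and $\langle b\rangle$ to get $\tau(m)+\tau(n)-2\leq 3$. The only difference is that you supply the explicit normalizer computation $N_G(\langle b^{n_2}\rangle)=\langle a^{m/d},b\rangle$ where the paper simply asserts that all subgroups of $\langle a\rangle$ and all proper subgroups of $\langle b\rangle$ fail to be normalizers.
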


\begin{proof}
First of all, we observe that $G$ cannot have subgroups of type $\mathbb{Z}_p\times\mathbb{Z}_p$ for no prime $p$. Indeed, if $H$ would be such a subgroup of $G$, then all $p+2$ proper subgroups of $H$ are not normalizers and $p+2\geq 4$, contradicting the hypothesis. Thus all abelian $p$-subgroups of $G$ are cyclic. By Theorem A, it follows that the Sylow subgroups of $G$ are either cyclic or generalized quaternion $2$-groups. If $G$ has a Sylow $2$-subgroup of type $Q_{2^n}$, then there is $Q\leq G$ such that $Q\cong Q_8$. We easily get that all $5$ proper subgroups of $Q$ are not normalizers, a contradiction. Consequently, the Sylow subgroups of $G$ are cyclic, implying that $G$ is cyclic or a ZM-group.

If $G\cong\mathbb{Z}_n$, then the condition $|L(G)\setminus N_G|\leq 3$ means $\tau(n)\leq 4$ and we get $n=p^a$ with $p$ prime and $a\leq 3$, or $n=pq$ with $p,q$ distinct primes. If $G\cong {\rm ZM}(m,n,r)$, then $G$ has a normal cyclic subgroup $G_1$ of order $m$ and a cyclic subgroup $G_2$ of order $n$. Since all subgroups of $G_1$, as well as all proper subgroups of $G_2$, are not normalizers, we infer that $$\tau(m)+\tau(n)-2\leq |L(G)\setminus N_G|\leq 3$$and so $\tau(m)+\tau(n)\leq 5$, as desired.
\end{proof}

We are now able to prove Theorem 1.2..
\bigskip

\noindent{\bf Proof of Theorem 1.2.} Items a)-c) follow immediately from Lemma 2.1. For item d), assume that $|N_G|=|L(G)|-4$ but $G$ is not a ZM-group. Then $G$ is either cyclic, say $G\cong\mathbb{Z}_n$, or has a non-cyclic Sylow $p$-subgroup $P$. In the first case we get $\tau(n)=5$, i.e. $n=p^4$ for some prime $p$, while in the second case we observe that $P$ cannot be a generalized quaternion $2$-group.\footnote{As we have seen above, such a group has at least $5$ subgroups which are not nor\-ma\-li\-zers.} Then Theorem A shows that $P$ contains a subgroup $H$ of type $\mathbb{Z}_p\times\mathbb{Z}_p$. Since all proper subgroups of $H$ are not normalizers, we have $p+2\leq 4$, i.e. $p=2$. Also, it is clear that $H$ is normal in $G$. If $H\neq P$, then there is $K\leq P$ with $|K|=8$ and $H\subset K$. Since $\mathbb{Z}_2\times\mathbb{Z}_4$ is the unique group of order $8$ having only one subgroup of type $\mathbb{Z}_2\times\mathbb{Z}_2$, we infer that $K\cong\mathbb{Z}_2\times\mathbb{Z}_4$. Let $L\leq K$ such that $|L|=4$ and $L\neq H$. By hypothesis, $L$ is a normalizer, say $L=N_G(M)$ with $M\leq G$. Then $M\subseteq L\subset K$ and since $K$ normalizes all its subgroups we get $K\subseteq L$, a contradiction. Thus $H=P$, that is $G$ has a normal Sylow $2$-subgroup of type $\mathbb{Z}_2\times\mathbb{Z}_2$.

If $G=P\cong\mathbb{Z}_2\times\mathbb{Z}_2$, we are done. Assume next that $|G|=4p_1^{n_1}\cdots p_k^{n_k}$ for some odd primes $p_1,...,p_k$. As in the proof of Theorem 1.1, we get $n_1=...=n_k=1$. Let $i\in\{1,...,k\}$ and $P_i$ be a Sylow $p_i$-subgroup of $G$. Then $P_i$ is a normalizer, more precisely $P_i=N_G(P_i)$. This shows that $P_i$ is not properly contained in any abelian subgroup of $G$, implying that $G$ has no element of composite order. In other words, all non-trivial elements of $G$ have prime order and therefore $G$ is one of the groups described in Theorem B. Since $G$ is not a $p$-group and $A_5$ does not satisfy the hypothesis, we infer that $G$ is a Frobenius group with kernel $\mathbb{Z}_2\times\mathbb{Z}_2$ and complement of order $p_1$. It follows that $p_1=3$ and $G\cong A_4$, completing the proof.\hspace{50mm}$\square$
\smallskip

Finally, we note that ZM-groups $G\cong {\rm ZM}(m,n,r)$ satisfying $|N_G|=|L(G)|-4$ can be classified according to the inequality $\tau(m)+\tau(n)\leq 6$. The dihedral groups $D_{30}$ and $D_{54}$ are examples of such groups.\vspace{2mm}\newpage

\bigskip{\bf Funding.} The author did not receive support from any organization for the submitted work.

\bigskip{\bf Conflicts of interests.} The author declares that he has no conflict of interest.

\bigskip{\bf Data availability statement.} My manuscript has no associated data.

\vspace*{5ex}\small

\hfill
\begin{minipage}[t]{5cm}
Marius T\u arn\u auceanu \\
Faculty of  Mathematics \\
``Al.I. Cuza'' University \\
Ia\c si, Romania \\
e-mail: {\tt tarnauc@uaic.ro}
\end{minipage}


\begin{thebibliography}{10}
\bibitem{1} W.C. Calhoun, {\it Counting subgroups of some finite groups}, Amer. Math. Monthly {\bf 94} (1987), 54-59.
\bibitem{2} K.N. Cheng, M. Deaconescu, M.L. Lang and W. Shi, {\it Corrigendum and addendum to "Classification of finite groups with all elements of prime order"}, Proc. Amer. Math. Soc. {\bf 117} (1993), 1205-1207.
\bibitem{3} M. Deaconescu, {\it Classification of finite groups with all elements of prime order}, Proc. Amer. Math. Soc. {\bf 106} (1989), 625–629.
\bibitem{4} A. Galoppo, {\it Groups with dense normal-by-finite subgroups}, Ricerche Mat. {\bf 46} (1997), 45-48.
\bibitem{5} A. Galoppo, {\it Groups with dense nearly normal subgroups}, Note Mat. {\bf 20} (2000/2001), 15-18.
\bibitem{6} F. de Giovanni and A. Russo, {\it Groups with dense subnormal subgroups}, Rend. Semin. Mat. Univ. Padova {\bf 101} (1999), 19-27.
\bibitem{7} B. Huppert, {\it Endliche Gruppen}, I, Springer Verlag, Berlin, 1967.
\bibitem{8} I.A. Malinowska, {\it Finite groups with few normalizers or involutions}, Arch. Math. {\bf 112} (2019), 459-465.
\bibitem{9} A. Mann, {\it Groups with dense normal subgroups}, Israel J. Math. {\bf 6} (1968), 13-25.
\bibitem{10} M.D. P\'{e}rez-Ramos, {\it Groups with two normalizers}, Arch. Math. {\bf 50} (1988), 199-203.
\bibitem{11} R. Schmidt, {\it Subgroup lattices of groups}, de Gruyter Expositions in Mathe\-ma\-tics 14, de Gruyter, Berlin, 1994.
\bibitem{12} M. Suzuki, {\it Group theory}, I, II, Springer-Verlag, Berlin, 1982, 1986.
\bibitem{13} G. Vincenzi, {\it Groups with dense pronormal subgroups}, Ricerche Mat. {\bf 40} (1991), 75-79.
\bibitem{14} M. Zarrin, {\it On groups with a finite number of normalisers}, Bull. Aust. Math. Soc. {\bf 86} (2012), 416-423.
\bibitem{15} M. Zarrin, {\it On solubility of groups with few normalisers}, Bull. Aust. Math. Soc. {\bf 90} (2014), 247-249.
\end{thebibliography}
\end{document}